\theoremstyle{definition}
\newtheorem{defn}{Definition}
\newtheorem*{nota}{Notational Convention}
\theoremstyle{plain}
\newtheorem{lem}{Lemma}
\newtheorem{cor}{Corrolary}
\title{Another Approach on Power Sums}
\begin{document}
\author[C. Muschielok]{Christoph Muschielok}
\address{Department of Chemistry, Technical University of Munich, 85748 Garching, Germany}
\email{c.muschielok(at)tum.de}

\begin{abstract}
	We show that explicit forms for certain polynomials~$\psi^{(a)}_m(n)$
	with the property
	\[ \psi^{(a+1)}_m(n) = \sum_{\nu=1}^n \psi_m^{(a)}(\nu) \] 
	can be found (here, $a,m,n\in\mathbb{N}_0$). We use these polynomials
	as a basis to express the monomials~$n^m$. Once the expansion
	coefficients are determined, we can express the $m$-th power
	sums~$S^{(a)}_m(n)$ of any order $a$,
	\[ S^{(a)}_m(n) = \sum_{\nu_a = 1}^n \cdots \sum_{\nu_2 = 1}^{\nu_3} \sum_{\nu_1=1}^{\nu_2} \nu_1^m\,, \]
	in a very convenient way by exploiting the summation property of the
	$\psi_m^{(a)}$,
	\[ S^{(a)}_m(n) = \sum_k c_{mk} \psi_k^{(a)}(n)\,. \]
\end{abstract}
\maketitle

\section{Introduction}
Power sums have been studied for a long time, for example by Nicomachus of
Gerasa, who demonstrated that the sum of cubes is the square of a triangular
number, that is
\begin{align}
	S_3^{(1)}(n) = \sum_{\nu = 1}^n \nu^3 = \left[ \frac{n(n+1)}{2} \right]^2\,.
\end{align}
This was known to Faulhaber who worked on power sums in the 17th century. In
his work \emph{Academia Algebr\ae} he presents formulas for the sums of odd
powers up to the 17th.\cite{knuth1993faulhaberpowersums} For odd powers, the
power sum can be represented by a polynomial of the triangular numbers. This
was shown by \citeauthor{jacobi1834powersums} shortly over 200 years
later.\cite{jacobi1834powersums} Jacob Bernoulli found a closed form for power
sums in 1713 (\emph{Summ{\ae} potestatum}) which can be written
\begin{align}
	S_m^{(1)}(n) = \sum_{\nu = 1}^n \nu^m = \frac{1}{m+1} \sum_{k=0}^m {m+1\choose k} B_k n^{m-k+1}\,,
\end{align}
with the binomial coefficient~$m+1 \choose k$ and the Bernoulli numbers~$B_k$
($B_1 = 1/2$). This work was published posthumously in
1713.\cite{bernoulli1713arsconjectandi} 

In the following, we define polynomials~$\psi_m(n)$ which are related to some
pyramidal numbers and use them to express a power~$n^m$. The coefficients of
this expansion can be used together with a generalization of the~$\psi_m(n)$
which we also give here, to yield the value of any power sum.

\section{Definition and Properties of the $\psi_m$ Polynomials}
\begin{defn}
        We define the polynomial sequences
        \begin{align}
    	    \psi_m(n) = n + (m-1) (n-1) B_{m-1, n-1}\,,
        \end{align}
        where $m,n\in\mathbb{N}$ are natural numbers and~\[B_{a,b} =
        \frac{(a+b)!}{a!\,b!}\] is the binomial coefficient.
\end{defn}
The~$\psi_m(n)$ yield the sequences of the $m$-th powers of~$n$ for $m<4$.
That is, the quadratic and cubic numbers, $n^2$ and $n^3$, for $m=2$ and $m=3$,
and the ``linear numbers'' $n$ for $m=1$. This is of no particular interest, we
could probably find some families of sequences, for which this is true. To see,
why this could indeed be interesting, we need to introduce further sequences.
\begin{defn}
	On top of the $\psi_m$, we recursively define the polynomial
	sequences~$\psi^{(a)}_m$ for $a \in \mathbb{N}_0$,
        \begin{align}
    	    \psi_m^{(a)}(n) = \psi_m^{(a)}(n-1) + \psi^{(a-1)}_m(n)\,.
		\label{eq:RecursivePsiMA}
        \end{align}
        Notationwise, we use the convention~$\psi_m^{(0)} \equiv \psi_m$.
\end{defn}
\begin{lem}
	\label{lem:PsiMASum}
	It immediately follows from Eq.~\eqref{eq:RecursivePsiMA}, that
	$\psi_m^{(a)}(n)$ is given by the sum of all~$\psi_m^{(a-1)}(\nu)$ for
	$1 \nu \le $:
        \begin{align}
		\psi^{(a)}_m = \sum_{\nu=1}^n \psi_m^{(a-1)}(\nu)\,.
		\label{eq:PsiMASum}
        \end{align}
	The $\psi_m^{(a)}(n)$ are the polynomial series of $a$-th order with
	respect to the sequence~$\psi_m(n)$.
\end{lem}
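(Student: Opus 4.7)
The plan is to prove~\eqref{eq:PsiMASum} by induction on $n$, using the defining recursion~\eqref{eq:RecursivePsiMA} directly. The identity is essentially a telescoping statement: each application of~\eqref{eq:RecursivePsiMA} peels off one term of the sum, so unrolling the recursion all the way from $n$ down to $0$ should yield the claimed sum, provided the boundary value $\psi_m^{(a)}(0)$ is zero.

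Before the induction, I would address this boundary condition, which is the one nontrivial point in an otherwise mechanical argument. For $a=0$ one checks directly from the explicit definition that $\psi_m(0) = 0$, since the term $(n-1)B_{m-1,n-1}$ vanishes at $n=0$ (interpreting the binomial coefficient with a negative lower argument as zero, or equivalently fixing $\psi_m(0)=0$ as an initial condition). For $a\geq 1$, the recursion does not by itself pin down $\psi_m^{(a)}(0)$, so one has to impose $\psi_m^{(a)}(0)=0$ as the natural boundary condition that makes the sum in~\eqref{eq:PsiMASum} start at $\nu=1$. I would state this explicitly and then use it as the base case of the induction.

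The inductive step is then immediate. Assuming
\[
\psi_m^{(a)}(n-1) = \sum_{\nu=1}^{n-1}\psi_m^{(a-1)}(\nu),
\]
an application of~\eqref{eq:RecursivePsiMA} gives
\[
\psi_m^{(a)}(n) = \psi_m^{(a)}(n-1) + \psi_m^{(a-1)}(n) = \sum_{\nu=1}^{n-1}\psi_m^{(a-1)}(\nu) + \psi_m^{(a-1)}(n) = \sum_{\nu=1}^{n}\psi_m^{(a-1)}(\nu),
\]
which closes the induction. The second clause of the lemma, that $\psi_m^{(a)}(n)$ is the $a$-th order polynomial series of $\psi_m(n)$, follows by iterating~\eqref{eq:PsiMASum} in $a$ starting from $\psi_m^{(0)} \equiv \psi_m$, which is really just the definition of a repeated summation.

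The main (and only) obstacle is thus not the induction itself but the bookkeeping at the boundary: making sure that $\psi_m^{(a)}(0)=0$ is consistent with the definition for all $a$, so that the lower summation index matches. Once that is settled, the lemma is a one-line telescoping argument.
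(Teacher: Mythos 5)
Your proof is correct and takes essentially the same route as the paper: the paper's own argument unrolls the recursion~\eqref{eq:RecursivePsiMA} step by step, which is precisely your induction on $n$ written as a telescoping chain. The one thing you add --- making explicit that the boundary value $\psi_m^{(a)}(0)=0$ must be checked (for $a=0$) or imposed (for $a\ge 1$) so that the sum correctly starts at $\nu=1$ --- is a point the paper's proof silently assumes when it passes from $\psi_m^{(a)}(n-k)+\sum_{\nu=n-k+1}^{n}\psi_m^{(a-1)}(\nu)$ to the final line, so your version is the more careful one.
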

\begin{proof}[Proof of Lemma~\ref{lem:PsiMASum}]
	We use Eq.~\eqref{eq:RecursivePsiMA} time after time to rewrite the
	$\psi_m^{(a)}$ term until we are left with Eq.~\eqref{eq:PsiMASum}:
	\begin{align*}
		\psi_m^{(a)}(n) &= \psi_m^{(a)}(n-1) + \psi_m^{(a-1)}(n)\,,\\
				&= \psi_m^{(a)}(n-2) + \psi_m^{(a-1)}(n-1) + \psi_m^{(a-1)}(n)\,,\\
				&\dotsc\\
				&= \psi_m^{(a)}(n-k) + \sum_{\nu=n-k+1}^{n} \psi_m^{(a-1)}(\nu)\,,\\
		\psi_m^{(a)}(n) &= \sum_{\nu=1}^n \psi_m^{(a-1)}(\nu)\,. \qedhere
	\end{align*}
\end{proof}

We can state a lemma about linear combinations of functions which show
the recursive property of Eq.~\eqref{eq:RecursivePsiMA}.
\begin{lem}
	\label{lem:RecursiveLinearCombinations}
	The recursive property Eq.~\eqref{eq:RecursivePsiMA} and its
	reformulation as the sum in Eq.~\eqref{eq:PsiMASum} hold for any
	sequence which is a certain type of linear combination of the
	$\psi_m^{(a)}$. Let the coefficients $c_1, c_2 \in \mathbb{K}$ be
	elements of some field.
	\begin{align}
		f^{(a)}(n) = c_1 \psi_m^{(a)}(n) + c_2 \psi_{m'}^{(a)}(n) 
		\implies f^{(a)}(n) = f^{(a)}(n-1) + f^{(a-1)}(n)\,.
		\label{eq:RecursiveLinearCombinations}
	\end{align}
\end{lem}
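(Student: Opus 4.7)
The plan is to observe that the recursion in Eq.~\eqref{eq:RecursivePsiMA} is a linear functional relation: if two sequences $g^{(a)}(n)$ and $h^{(a)}(n)$ each satisfy $g^{(a)}(n) = g^{(a)}(n-1) + g^{(a-1)}(n)$ and similarly for $h$, then so does any $\mathbb{K}$-linear combination $c_1 g^{(a)}(n) + c_2 h^{(a)}(n)$. The lemma is the specialization of this to $g = \psi_m$ and $h = \psi_{m'}$, so the proof is essentially a one-line computation.

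More concretely, I would start from the definition
\[ f^{(a)}(n) = c_1 \psi_m^{(a)}(n) + c_2 \psi_{m'}^{(a)}(n)\,, \]
apply Eq.~\eqref{eq:RecursivePsiMA} separately to $\psi_m^{(a)}(n)$ and $\psi_{m'}^{(a)}(n)$, and then regroup the four resulting terms according to which sequence they belong to. The terms with argument $n-1$ combine into $c_1 \psi_m^{(a)}(n-1) + c_2 \psi_{m'}^{(a)}(n-1) = f^{(a)}(n-1)$, and the terms whose superscript is $a-1$ combine into $c_1 \psi_m^{(a-1)}(n) + c_2 \psi_{m'}^{(a-1)}(n) = f^{(a-1)}(n)$, yielding the desired identity.

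For the companion statement about the sum form Eq.~\eqref{eq:PsiMASum}, I would either (i) invoke Lemma~\ref{lem:PsiMASum} termwise, using that $\sum_{\nu=1}^n$ is $\mathbb{K}$-linear to pull $c_1$ and $c_2$ in and out of the sum, or (ii) note that the proof of Lemma~\ref{lem:PsiMASum} only used the recursion just established for $f$, so it applies verbatim to $f^{(a)}$.

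There is no real obstacle here: the result is an immediate consequence of the linearity of both the difference operator and the summation operator. The only stylistic choice is whether to phrase the argument for exactly two summands, as the statement does, or to remark in passing that the same reasoning extends by induction to arbitrary finite linear combinations of the $\psi_m^{(a)}$, which is what will actually be needed in later sections when the monomials $n^m$ are expanded in the $\psi$-basis.
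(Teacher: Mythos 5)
Your proposal is correct and follows essentially the same route as the paper's own proof: apply Eq.~\eqref{eq:RecursivePsiMA} to each $\psi$-term separately and regroup the four resulting terms into $f^{(a)}(n-1)$ and $f^{(a-1)}(n)$. Your additional remarks on the sum form and on extending to arbitrary finite linear combinations go slightly beyond what the paper writes, but the core argument is the same.
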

\begin{proof}[Proof of Lemma~\ref{lem:RecursiveLinearCombinations}]
	Insert the recursive property into the $\psi$-terms of
	Eq.~\eqref{eq:RecursiveLinearCombinations} and evaluate:
	\begin{align}
		f^{(a)}(n) &= c_1 \left[ \psi_m^{(a)}(n-1) + \psi_m^{(a-1)}(n) \right] 
		              + c_2 \left[ \psi_{m'}^{(a)}(n-1) + \psi_{m'}^{(a-1)}(n) \right]\,,\\
			   &= \underbrace{c_1 \psi_m^{(a)}(n-1) + c_2 \psi_{m'}^{(a)}(n-1)}_{f^{(a)}(n-1)}
			      + \underbrace{c_1 \psi_m^{(a-1)}(n) + c_2 \psi_m^{(a-1)}(n)}_{f^{(a-1)}(n)}\,,\\
		f^{(a)}(n) &= f^{(a)}(n-1) + f^{(a-1)}(n)\,. \qedhere
	\end{align}
\end{proof}

\section{Linking the $\psi_m$ Polynomials to Power Sums}
Why bother with all of this? Notice, how the~$\psi_m(n)$ are just the $m$-th
powers of $n$. If we can find a closed form for~$\psi_m^{(a)}(n)$, we
automatically have the $a$-th power sum
\begin{align}
	S_m^{(a)}(n) &= \sum_{\nu_a=1}^{n} \sum_{\nu_{a-1}=1}^{\nu_a} \cdots \sum_{\nu_1=1}^{\nu_2} \nu^m\,.
	\label{eq:PowerSumA}
\end{align}
However, this holds only for $m<4$. Fortunately, we can take the nice property of
Eq.~\eqref{eq:RecursivePsiMA} or Eq.~\eqref{eq:PsiMASum} with us.
\begin{nota}
	For multiple summations with common ultimate summation boundaries in
	which the intermediate upper summation boundary of each sum is given by
	the index of the next sum, as in Eq.~\eqref{eq:PowerSumA}, we want to
	introduce the notation
	\[ \sum_{\boldsymbol{\nu}_a = 1}^{n} \nu^m\,,\]
	where the multi-index notation~$\boldsymbol{\nu}_a$ means $(\nu_a, \nu_{a-1}, \dotsc, \nu_1)$.
\end{nota}
Therefore, we want to find an expansion of the power~$n^m$ in terms of the $\psi_m(n)$,
\begin{align}
	n^m = \sum_{k} c_{mk} \psi_k(n)\,,
\end{align}
so that we can rewrite the power sums of $a$-th order as
\begin{align}
	\sum_{\boldsymbol{\nu}_a=1}^{n} \nu^m &= \sum_{\boldsymbol{\nu}=1}^n \sum_k c_{mk} \psi_m(\nu)\,,\\
					      &= \sum_k c_{mk} \sum_{\boldsymbol{\nu}=1}^n \psi_m(\nu)\,,\\
					      &= \sum_k c_{mk} \psi_m^{(a)}(n)\,.
\end{align}
Notice, how all the $a$ sums over the~$\nu_i$ are swallowed by the basis
functions~$\psi_k$ by multiple use of Eq.~\eqref{eq:PsiMASum} and turn them
into~$\psi_k^{(a)}$.

The important bit is: if we have a closed form for the~$\psi_m^{(a)}(n)$, once
we know the set of $c_mk$, we have the value for any~$S_m^{(a)}(n)$. In the
following, we first show that indeed one can find such a closed form and that
we can easily obtain the values of the expansion coefficients.

\section{Finding a Closed Form for the Series Polynomials}
In the following, it is our goal to find an expression for
the~$\psi_m^{(a)}(n)$. It turns out, that a good starting point for this is to
realize, that a similar identity to~Eq.~\eqref{eq:RecursivePsiMA}, holds for
the binomial coefficient:
\begin{align}
	B_{a,b} = B_{a-1,b} + B_{b, a-1}\,.
	\label{eq:RecursiveBinomialCoeff}
\end{align}
This is just what we see in Pascal's triangle and we can show this by a few
simple steps of algebra, after inserting the definition for each symbol. With
the same argument with which we proved Lemma~\ref{lem:PsiMASum}, we may write
\begin{align}
	B_{a,b} = \sum_{\beta=1}^b B_{a-1,\beta}\,.
	\label{eq:BinomialCoeffSum}
\end{align}

Due to this identity, we can already find a closed form for
the~$\psi_m^{(a)}(n)$ in terms of binomial coefficients as we show in the rest
of this section. Let us first rewrite the expression for~$\psi_m(n)$ as a
linear combination of binomial coefficients.
\begin{align}
	\psi_m^{(a)}(n) &= n + (m-1) (n-1) B_{m-1, n-1}\,,\\
			&= B_{1, n-1} + m (m-1) B_{m, n-2}\,.
\end{align}
Cancelling the factor $n-1$ from the binomial coefficient~$B_{m-1, n-1}$, so
that we can make a~$B_{m, n-2}$ out of the second summand, leads to the term
formally not being defined for $n=1$. We have to make sure, that the limit of
the series at $n=1$ has still a defined value.
\begin{align}
	\lim_{n=1} B_{m, n-2} &= \lim_{n=1} \frac{(n+m-2)!}{m!\,(n-2)!}\,,\nonumber\\
	                      &= \lim_{n=1} (n-1) \frac{(n+m-2)!}{m!\,(n-1)!}\,,\nonumber\\
	\lim_{n=1} B_{m, n-2} &= 0\,.
\end{align}
\begin{lem}
	\label{lem:ClosedFormPsiMA}
	The elements of the series of $a$-th order $\psi_m^{(a)}(n)$ have the closed form
	\begin{align}
		\psi_m^{(a)}(n) = B_{a+1,n-1} + \frac{m(m-1)}{m+a} (n-1) B_{m+a-1,n-1}\,.
		\label{eq:ClosedFormPsiMA}
	\end{align}
\end{lem}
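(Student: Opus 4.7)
The plan is to prove Eq.~\eqref{eq:ClosedFormPsiMA} by induction on $a$. The base case $a=0$ is immediate: setting $a=0$ in the claimed formula gives $B_{1,n-1} + \frac{m(m-1)}{m}(n-1)B_{m-1,n-1} = n + (m-1)(n-1)B_{m-1,n-1}$, which is exactly the definition of $\psi_m(n)$.

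For the inductive step, I would assume the formula for some $a$ and compute $\psi_m^{(a+1)}(n) = \sum_{\nu=1}^n \psi_m^{(a)}(\nu)$ via Lemma~\ref{lem:PsiMASum}. Substituting the inductive hypothesis splits the calculation into two pieces. The first, $\sum_{\nu=1}^n B_{a+1,\nu-1}$, collapses to $B_{a+2,n-1}$ by direct application of the summation identity Eq.~\eqref{eq:BinomialCoeffSum}, matching the first term of the target formula.

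The second piece, $\sum_{\nu=1}^n (\nu-1)B_{m+a-1,\nu-1}$, requires first absorbing the prefactor $(\nu-1)$ into the binomial coefficient via the elementary identity $(\nu-1)B_{m+a-1,\nu-1} = (m+a)B_{m+a,\nu-2}$, valid for $\nu\ge 2$ (the $\nu=1$ term vanishes on both sides, consistent with the limit argument given just before the lemma). After applying Eq.~\eqref{eq:BinomialCoeffSum} to the shifted sum one obtains $(m+a)B_{m+a+1,n-2}$, and the same identity run in reverse, $(m+a+1)B_{m+a+1,n-2} = (n-1)B_{m+a,n-1}$, recasts this as $\frac{m+a}{m+a+1}(n-1)B_{m+a,n-1}$. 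Multiplying by the prefactor $\frac{m(m-1)}{m+a}$ carried along from the hypothesis then produces exactly the second term of the claim at $a+1$.

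The step I expect to require the most care is the second sum: one must shift indices consistently when moving $(\nu-1)$ through the binomial coefficient, apply Eq.~\eqref{eq:BinomialCoeffSum} with the correct range, and then unshift to recover $B_{m+a,n-1}$ with the right prefactor. The arithmetic itself is routine, but the index gymnastics is easy to get wrong, and it is precisely this manipulation that produces the factor $\frac{1}{m+a+1}$ distinguishing level $a+1$ from level $a$.
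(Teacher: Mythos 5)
Your proposal is correct and follows essentially the same route as the paper's proof: induction on $a$ via Lemma~\ref{lem:PsiMASum}, absorbing the factor $(\nu-1)$ into a shifted binomial coefficient, summing with Eq.~\eqref{eq:BinomialCoeffSum}, and unshifting to recover the $\frac{1}{m+a+1}$ prefactor. The only (harmless) difference is that you anchor the induction at $a=0$, where the formula reduces to the definition of $\psi_m(n)$, whereas the paper verifies the case $a=1$ explicitly.
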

\begin{proof}[Proof of Lemma~\ref{lem:ClosedFormPsiMA}]
	We will use a proof via induction and begin from
	Eq.~\eqref{eq:PsiMASum} for $a=1$ and insert the binomial coefficient
	representation for~$\psi_m(n)$.
	\begin{align}
		\psi_m^{(1)}(n) &= \sum_{\nu=1}^n \psi_m(n)\,,\\
		                &= \sum_{\nu=1}^n \left[ B_{1,\nu-1} + m(m-1) B_{m, \nu-2} \right]\,,\\
				&= \sum_{\nu=1}^n B_{1,\nu-1} + m(m-1) \sum_{\nu=1}^n B_{m, \nu-2}\,,\\
				&= B_{2,n-1} + m(m-1) B_{m+1, n-2}\,,\\
				&= B_{2,n-1} + \frac{m(m-1)}{m+1} (n-1) B_{m, n-1}\,.
	\end{align}
	This is just the form given by~Eq.~\eqref{eq:ClosedFormPsiMA} for
	$a=1$, so that we have a valid start for the induction. Now suppose,
	that Eq.~\eqref{eq:ClosedFormPsiMA} holds for any~$a$. We have to prove,
	that it holds also for~$a+1$:
	\begin{align}
		\psi_m^{(a+1)}(n) &= \sum_{\nu=1}^n \psi_m^{(a)}(\nu)\,,\\
		                  &= \sum_{\nu=1}^n \left[ B_{a+1,\nu-1} + \frac{m(m-1)}{m+a} (\nu-1) B_{m+a-1,\nu-1} \right]\,,\\
				  &= \sum_{\nu=1}^n B_{a+1,\nu-1} + \frac{m(m-1)}{m+a} \sum_{\nu=1}^n (\nu-1) B_{m+a-1,\nu-1}\,,\\
				  &= B_{a+2,n-1} + m(m-1) \sum_{\nu=1}^n B_{m-a,\nu-2}\,,\\
				  &= B_{a+2,n-1} + m(m-1) B_{m+a+1,n-2}\,,\\
				  &= B_{a+2,n-1} + \frac{m(m-1)}{m+a+1} (n-1) B_{m+a,n-1}\,.
	\end{align}
	This is just Eq.~\eqref{eq:ClosedFormPsiMA} for $a+1$ substituted
	for~$a$. Thus, the~$\psi_m^{(a)}(n)$ have indeed the proposed closed
	form.
\end{proof}

\section{Coefficients of the Monomial Expansion of $\psi_m$}
With the expression for the $\psi_m^{(a)}(n)$ ready, what remains to do is to
find the coefficients~$c_{mk}$. Before we tackle this problem, let us write
the~$\psi_m(n)$ in terms of powers of~$n$, at first, that is we expand it in
terms of the monomials
\begin{align}
	\psi_m(n) = \sum_{i=0}^m a_{mi} n^i\,.
	\label{eq:amiExpansion}
\end{align}
We can find the coefficients~$a_{mi}$ in Eq.~\eqref{eq:amiExpansion} using
Vieta's formulas.\cite{bronstein2008taschenbuch} For this, we
rewrite~$\psi_m(n)$ as
\begin{align}
	&\psi_m(n) = n + \frac{1}{(m-2)!} f_m(n)\,,\\
	\intertext{with $f_m(n)$ given by}
	&f_m(n) = \prod_{k=-1}^{m-2} (n + k) = \prod_{i=1}^m (n - \alpha_i) = \sum_{k=0}^m b_{m,m-k} n^k\,,\\
	\intertext{where the $\alpha_i$ are of course the integer roots of this
	polynomial:}
	&\alpha_i = -(i - 2), 1 \le i \le m\,.
\end{align}
The coefficients~$b_{m,m-k}$ are connected to our coefficients of interest~$a_{mi}$ by
\begin{align}
	a_{m,m-k} = \frac{b_{m,m-k}}{(m-2)!} + \delta_{1k},\,,m \ge 2\,,
\end{align}
where we use the Kronecker symbol~$\delta_{1k}$ to account for the additional
term~$n$ of~$\psi_m$ with respect to $f_m$. We discuss the cases $m=1$ and
$m=0$ later. Until then, we consider everything under the condition~$m\ge2$.
\begin{cor}
	By inserting the roots $\alpha_i$ into Vieta's formulas, it can be
	verified that the expansion coefficients of $f_m(n)$ for the lowest orders are given
	by the following expressions:
	\begin{align}
		b_{m,1}   &= -(m-2)!\,,\\
		b_{m,0}   &= 0\,.
	\end{align}
	Together with $b_{mm}=1$, we find for the actual expansion coefficients $a_{mk}$ of~$\psi_m$
	\begin{align}
		a_{m0}    &= 0\,,\\
		a_{m1}    &= 0\,,\\
		a_{mm}    &= \frac{1}{(m-2)!}\,.
	\end{align}
\end{cor}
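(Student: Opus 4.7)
The plan is to read off the three named coefficients of $f_m(n)$ directly from its factored form, and then transfer the result to the $a_{mi}$ through the relation $\psi_m(n)=n+f_m(n)/(m-2)!$ already set up in the excerpt.

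Since $f_m(n)=\prod_{i=1}^m(n-\alpha_i)$ is a product of $m$ monic linear factors, its leading term is $n^m$, which already gives $b_{mm}=1$. For the remaining two coefficients I would exploit the fact that the root list $\alpha_i=-(i-2)$ contains $\alpha_2=0$, so $f_m$ itself carries an explicit factor of $n$:
\begin{equation*}
f_m(n) \;=\; (n-1)\,n\,\prod_{k=1}^{m-2}(n+k).
\end{equation*}
This immediately yields $b_{m,0}=f_m(0)=0$. For the linear coefficient I would evaluate $f_m(n)/n$ at $n=0$, obtaining
\begin{equation*}
b_{m,1} \;=\; (0-1)\,\prod_{k=1}^{m-2} k \;=\; -(m-2)!.
\end{equation*}
Equivalently, in the Vieta language flagged by the author, only the single term of the elementary symmetric function $e_{m-1}(\alpha_1,\dots,\alpha_m)$ that omits $\alpha_2$ survives, and it evaluates to the same quantity up to an overall sign handled by the $(-1)^{m-1}$ in Vieta's formula.

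Feeding these three values into $\psi_m(n)=n+f_m(n)/(m-2)!$ and reading off monomial coefficients finishes the corollary: the constant term of $\psi_m$ is $0+0=0$, so $a_{m0}=0$; the linear coefficient is $1+b_{m,1}/(m-2)!=1-1=0$, so $a_{m1}=0$; and the leading coefficient is $b_{mm}/(m-2)!=1/(m-2)!$, giving $a_{mm}=1/(m-2)!$. There is no real obstacle — the only place where one can slip is bookkeeping around the Kronecker-delta contribution from the stray $n$ term, i.e.\ making sure it is added only to the $a_{m1}$ slot and not to $a_{m0}$ or $a_{mm}$ when $m\ge 2$.
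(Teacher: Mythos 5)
Your proposal is correct and follows the same route the paper intends: the paper simply asserts the values "by inserting the roots into Vieta's formulas," and your reading of $b_{m,0}$, $b_{m,1}$, $b_{mm}$ off the factored form $(n-1)\,n\,\prod_{k=1}^{m-2}(n+k)$ is just the direct evaluation of the relevant elementary symmetric functions, with the transfer to the $a_{mi}$ handled exactly as in the paper. Your closing remark about keeping the Kronecker-delta contribution in the $a_{m1}$ slot is the one genuine bookkeeping hazard, and you handle it correctly.
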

As we want to consider only classical polynomials, we write, subsuming the
results for the~$b_{ml}$, for the expansion coefficient~$a_{mk}$
\begin{align}
	a_{mk} = 0,\text{ if }  k < 2 \lor k > m,\,.
\end{align}
Therefore, the non-zero values for the $a_{mk}$ are those with~$2 \le k \le m$.

We turn now to the remaining cases $m = 0$ and $m = 1$. For the latter, we
already mentioned, that $\psi_1(n) = n$. Thus, its single expansion coefficient
is~$a_{11} = 1$. Power sums of $a$-th order of $n$ are given by the $n$-th
simplicial $a$-polytopic number\cite{oeis2022simplexnumbers}
\begin{align}
	\sigma_{a}(n) = \frac{(n+a-1)!}{(n-1)!\,a!} = B_{a, n-1}\,.
\end{align}
This is qualitatively different to the case of the~$\psi_m$ with $m \ge 2$. For
those, we have in general a linear combination of two binomial coefficients,
whereas for $m=1$ we can express it also as a single binomial coefficient. 

The case $m = 0$ is not included in how we defined the $\psi_m$ here. However,
it can be reduced to $m=1$: clearly, $n^0 = 1$, so that any power sum of $a$-th
order can be reduced to a power sum of $n$ of $(a-1)$-th order. In the
following, we will restrict ourselves to $m \ge 2$.

\section{Recursive Definition of the $\psi_m$-Expansion Coefficients of $n^m$}
Expressing $\psi_m$ in terms of the monomials~$\lbrace{n^k}\rbrace_{k=2}^m$ and
expressing~$n^m$ in terms of the $\lbrace{\psi_k}\rbrace_{k=2}^m$, are
transformations between a pair of dual bases. Thus, the expansion coefficients
must build mutually inverse square matrices~$A_m = (a_{\mu\kappa})_{2\le \mu,\kappa \le m}$ 
and $C_m = (c_{\mu\kappa})_{2\le \mu,\kappa \le m}$, such that
\begin{align}
	\sum_{\kappa=2}^m a_{\mu\kappa} c_{\kappa\mu'} = \delta_{\mu\mu'}, (2\le \mu,\mu' \le m)\,.
	\label{eq:InverseMatrices}
\end{align}
Then, we can solve this for~$c_{\mu\mu'}$, the first term in the sum in
Eq.~\eqref{eq:InverseMatrices} which is non-zero, to obtain a recursive
expression for these coefficients.
\begin{align}
	c_{\mu\mu'} &= \frac{1}{a_{\mu\mu}} \left( \delta_{\mu\mu'} - \sum_{\kappa=2}^{\mu-1} a_{\mu\kappa} c_{\kappa\mu'}\right)\,,\\
		    &= (\mu-2)! \left( \delta_{\mu\mu'} - \sum_{\kappa=2}^{\mu-1} a_{\mu\kappa} c_{\kappa\mu'} \right)\,.
\end{align}
For clarity, we truncated the upper summation boundary to explicitly include
only non-zero values, $\mu' \le \kappa$, for $c_{\kappa\mu'}$. Alternatively,
we can write
\begin{align}
	c_{ml} &= \begin{cases} 
			1/a_{mm} = (m-2)!                       & \text{if } l = m\,,\\
			- (m-2)! \sum_{k=2}^{m-1} a_{mk} c_{kl} & \text{if } 2 \le l \le m-1\,,\\
			0                                       & \text{else.}
	          \end{cases}
\end{align}
This solves our problem: we now have a closed expression for the
$\psi^{(a)}_m(n)$ and also the transformation coefficients~$c_{mk}$. 
Of course, we can also build the matrix~$A_m$ and calculate its inverse.

As an example, we give the matrix $C_8 = (c_{\mu\kappa})_{2\le \mu,\kappa\le m}$ 
with explicit values up to $\mu=8, \kappa=8$. This matrix includes the $C_\mu$
matrices, $\mu < 8$, as square submatrices, which are obtained by truncating
$C_8$ at the appropriate row and column:
\begin{align}
	C_8 = (c_{\mu\kappa})_{2 \le \mu, \kappa \le 8} 
	    = \begin{pmatrix} 
		1      &        &        &        &        &        &         \\
		0      & 1      &        &        &        &        &         \\
		1      & -2     & 2      &        &        & \mbox{$(0)_{\mu<\kappa}$}       &         \\
		0      & 5      & -10    & 6      &        &        &         \\
		1      & -10    & 40     & -54    & 24     &        &         \\
		0      & 21     & -140   & 336    & -336   & 120    &         \\
		1      & -42    & 462    & -1764  & 3024   & -2400  & 720     \\
	      \end{pmatrix}
\end{align}
The coefficients~$c_{m\kappa}$ within each row seem to be of alternating sign,
where the highest-order non-zero coefficient~$c_{mm}$ on the diagonal always
has positive sign. Furthermore, the coefficient~$c_{m2}$ apparently is~$0$ for
odd orders~$m$ and $1$ for even orders. Looking at the distribution of the
absolute values of the coefficients~$|c_{mk}|$ for a set order~$m$, it seems as
if it assumes a maximum value for some $k<m$.

\section{Various Power Sums}
With this, we can write down any power sum~$S_m^{(a)}(n)$. Exemplarily, we want to give 
expressions for some of the better known of them. It is easy for $m=2$ and $m=3$:
\begin{align}
	& S^{(1)}_2(n) = \sum_{\nu=1}^n \nu^2 = \psi^{(1)}_2(n) = \left[ 1 + \frac{2}{3} (n - 1) \right] B_{2, n-1} = \frac{1}{6} n (n+1) (2n+1)\,.\\
	& S^{(1)}_3(n) = \sum_{\nu=1}^n \nu^3 = \psi^{(1)}_3(n) = B_{2,n-1} + \frac{3}{2} (n - 1) B_{3,n-1} = \left[\frac{n(n+1)}{2}\right]^2\,.
\end{align}
We reproduce Nicomachus's formula for $m=3$. For $m=2$ we find the square
pyramidal numbers\cite{oeis2022squarepyramidalnumbers}. The expressions become
more difficult starting with $m=4$:
\begin{align}
	S^{(1)}_4(n) &= \sum_{\nu=1}^n \nu^4 = 2\psi^{(1)}_4(n) - 2\psi^{(1)}_3(n) + \psi^{(1)}_2(n)\,,\\
		     &= B_{2,n-1} + 4 (n-1) B_{4,n-1} -3 (n-1) B_{3,n-1} + \frac{2}{3} (n-1) B_{2,n-1} \,,\\
		     &= 4 (n-1) B_{4,n-1} -3 (n-1) B_{3,n-1} + \frac{2n + 1}{3} B_{2,n-1}\,,\\
	             &= \frac{1}{30} n \left( 6n^4 + 15n^3 +10n^2 - 1 \right) \\
	S^{(1)}_5(n) &= 5\psi_3^{(1)}(n) -10\psi_4^{(1)}(n) +6\psi_5^{(1)}(n)\,,\\
		     &= B_{2,n-1} + 20(n-1) B_{5,n-1} - 24 (n-1) B_{4,n-1} + \frac{15}{2} (n-1) B_{3,n-1}\,,\\
		     &= \frac{1}{12} n^2 (n+1)^2 [2n^2 + 2n - 1]\,.
\end{align}
As a final example, we furthermore give the expression for $S_8^{(2)}(n)$:
\begin{align}
	\begin{split}
		S^{(2)}_8(n) &= 720 \psi^{(2)}_8 - 2400 \psi^{(2)}_7 + 3024 \psi^{(2)}_6 \\
			     &\hphantom{=}-1764\psi^{(2)}_5 + 462\psi^{(2)}_4 -42\psi^{(2)}_3 + \psi^{(2)}_2\,,
	\end{split}\\
	\begin{split}
		\hphantom{S^{(2)_8(n)}} &= (n-1)\left( 4032 B_{9,n-1} - 11200 B_{8,n-1} + 11340 B_{7,n-1}\vphantom{\frac{252}{5}} \right.\\
					&\hphantom{=}\left. -5040 B_{6,n-1} +924 B_{5,n-1} -\frac{252}{5} B_{4,n-1} \right) \\
					&\hphantom{=}+  \left[ 1 + \frac{1}{2}(n-1) \right] B_{3,n-1}
	\end{split}\\
	\hphantom{S^{(2)}_8(n)} &= \frac{1}{180} n (n+1)^2 (n+2) (2n^2 + 4n -1) (n^4 + 4n^3 + n^2 -6n + 3)\,.
\end{align}

\section{Final Considerations}
Finally, we want to show how the expansion coefficients~$c_{mk}$ are related to the Bernoulli numbers~$B_{i}$, as part of our results is equivalent to Bernoulli's ($a=1$). To put this into context, we expand the~$\psi_k^{(1)}(n)$ and set equal to Bernoulli's formula:
\begin{align}
	\sum_{k=2}^m c_{mk} \psi_k^{(1)}(n) &= \frac{1}{m+1} 
					       \sum_{l=1}^{m+1} {m+1\choose l} B_{m-l+1} n^l\,,\\
	\sum_{k=2}^m c_{mk} \sum_{j=1}^{k+1} g_{kj} n^j  &= \frac{1}{m+1} 
						            \sum_{l=1}^{m+1} 
                                                              {m+1\choose l} B_{m-l+1} n^l\,,\\
	\sum_{j=1}^{m+1} 
          \left\lbrace \sum_{k=2}^m c_{mk} g_{kj} \right\rbrace
          n^j 
            &= \frac{1}{m+1} 
               \sum_{l=1}^{m+1} 
                 {m+1\choose l} B_{m-l+1} n^l\,.
\end{align}
We require that the expansion coefficients of~$\psi^{(1)}_k(n)$, $g_{kj} = 0$
if $j>k$. Similar to what we already have used before, we then can decouple the
upper summation boundary and switch the summation order. We may identify the
product of coefficients with the coefficients in Bernoulli's expansion:
\begin{align}
	\sum_{k=2}^m c_{mk} g_{kj} = \frac{1}{m+1} {m+1\choose j} B_{m-j+1}\,,
\end{align}
which we can solve for the Bernoulli number~$B_{m-j+1}$:
\begin{align}
	B_{m-j+1} = \frac{(m-j+1)!\,j!}{m!} \sum_{k=2}^m c_{mk} g_{kj}\,.
\end{align}
We can thus express the Bernoulli numbers in terms of the $c_{mk}$ and the
coefficients~$g_{kj}$ of the~$\psi_k^{(1)}$. 

In the end, we want to state explicitly the main advantage of expressing
the~$S_m^{(a)}(n)$ in terms of the $\psi^{(a)}$ instead of using a single
polynomial at once: instead of putting the complexity of the summation
procedure into the coefficients, we shift it into the basis functions. The
recursive trait of those basis functions~$\psi^{(a)}_k(n)$ then makes for an elegant
generalization for more complex sums ($a>1$).
\printbibliography
\end{document}